
\documentclass{llncs}
\usepackage{amsmath}
\usepackage{amssymb}
\usepackage{enumerate}
\usepackage{varioref}
\usepackage{charter,eulervm}
\usepackage{graphicx}

\newcommand{\es}{\varnothing}

\title{{\sc Independent Sets in Edge-Clique Graphs}}

\author{
 Maw-Shang~Chang\inst{1}
\and 
 Ton~Kloks\inst{2} 
\and
 Ching-Hao~Liu\inst{2}} 
\institute{
 Department of Computer Science and Information Engineering\\
 Hungkuang University, Taiwan\\
 {\tt mschang@sunrise.hk.edu.tw}
\and 
 Department of Computer Science\\
 National Tsing Hua University, Taiwan\\
 {\tt chinghao.liu@gmail.com}
}

\pagestyle{plain}
\begin{document}

\maketitle

\begin{abstract}
We show that the edge-clique graphs of cocktail party graphs 
have unbounded rankwidth. This, and other observations lead us 
to conjecture that the edge-clique cover problem is NP-complete 
for cographs.  
We show that the independent set problem 
on edge-clique graphs of cographs and of distance-hereditary 
graphs can be solved in $O(n^4)$ time. We show that the independent 
set problem on edge-clique graphs of graphs without odd wheels 
remains NP-complete.  
\end{abstract}

\section{Introduction}

Let $G=(V,E)$ be an undirected graph with vertex set $V$ and 
edge set $E$. A clique is a complete subgraph of $G$. 

\begin{definition}
An edge-clique covering of $G$ is a family of 
complete subgraphs such that 
each edge of $G$ is in at least one member of the family. 
\end{definition}
The minimal cardinality of such a family is the 
edge-clique covering number, and we denote it by 
$\theta_e(G)$. 

\bigskip 

The problem of deciding if $\theta_e(G) \leq k$, 
for a given natural number $k$, 
is NP-complete~\cite{kn:kou,kn:orlin,kn:holyer}. 
The problem remains NP-complete when restricted to graphs 
with maximum degree at most six~\cite{kn:hoover}.  
Hoover~\cite{kn:hoover} gives a polynomial time algorithm for 
graphs with maximum degree at most five. 
For graphs with 
maximum degree less than five, this was already done 
by Pullman~\cite{kn:pullman}. 
Also for linegraphs the problem can be solved in 
polynomial time~\cite{kn:orlin,kn:pullman}. 
 
In~\cite{kn:kou} it is shown that approximating the 
clique covering number within a constant factor 
smaller than two remains 
NP-complete. 

\bigskip 

Gy\'arf\'as~\cite{kn:gyarfas} showed the following 
interesting lowerbound. 
Two vertices $x$ and $y$ are {\em equivalent\/} if 
they are adjacent and have the same closed neighborhood. 

\begin{theorem}
\label{gyarfas}
If a graph $G$ has $n$ vertices and contains neither isolated 
nor equivalent vertices then $\theta_e(G) \geq \log_2(n+1)$. 
\end{theorem}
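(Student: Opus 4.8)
The plan is to associate to each vertex $v$ its "clique fingerprint": given an edge-clique covering $\mathcal{C} = \{C_1,\dots,C_k\}$ with $k = \theta_e(G)$, let $\phi(v) \subseteq \{1,\dots,k\}$ be the set of indices $i$ such that $v \in C_i$. Since there are only $2^k$ possible fingerprints, if we can show that all fingerprints $\phi(v)$ are distinct and, moreover, at most one vertex can have the empty fingerprint, then $n \leq 2^k - 1 + 1$... more precisely we must be a little careful, and the claim we really want is $n + 1 \leq 2^k$, i.e. $k \geq \log_2(n+1)$. So the counting has to show that the map $\phi$ is injective and that $\es$ is not in its image (or is hit at most... ) — let me set up the dichotomy properly below.

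First I would observe that no vertex has empty fingerprint: since $G$ has no isolated vertex, every $v$ lies on some edge $e$, and $e$ is covered by some $C_i$, so $v \in C_i$ and $i \in \phi(v)$; hence $\phi(v) \neq \es$. So $\phi$ maps $V$ into the $2^k - 1$ nonempty subsets of $\{1,\dots,k\}$. The second and main step is to show $\phi$ is injective. Suppose $\phi(x) = \phi(y)$ for $x \neq y$. I claim $x$ and $y$ must then be equivalent, contradicting the hypothesis. To see adjacency: pick any $i \in \phi(x) = \phi(y)$ (nonempty by the previous step); then $x,y \in C_i$, and since $C_i$ is a clique, $xy \in E$. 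To see $N[x] = N[y]$: take $z \in N[x]$. If $z = x$ then $z = y \in N[y]$ trivially wait — I need $z\in N[y]$, and $y \in N[y]$, fine; symmetric for $z=y$. If $z \notin \{x,y\}$ and $z \in N(x)$, then the edge $xz$ lies in some $C_j$, so $x \in C_j$, i.e. $j \in \phi(x) = \phi(y)$, so $y \in C_j$; as $C_j$ is a clique containing $y$ and $z$, we get $yz \in E$, so $z \in N(y)$. By symmetry $N[x] = N[y]$, so $x$ and $y$ are equivalent — contradiction. Hence $\phi$ is injective.

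Combining: $\phi$ is an injection from $V$ into the nonempty subsets of a $k$-element set, so $n \leq 2^k - 1$, giving $2^k \geq n+1$ and therefore $\theta_e(G) = k \geq \log_2(n+1)$, as claimed.

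I do not expect a serious obstacle here; the argument is short and combinatorial. The one point that needs care is the handling of the cases $z \in \{x,y\}$ in the neighborhood-equality step (closed versus open neighborhoods), and making sure the "no isolated vertex" hypothesis is genuinely used — it is exactly what guarantees $\phi(v) \neq \es$, which is what turns the bound $n \le 2^k$ into the sharper $n \le 2^k - 1$. If one only wanted $\theta_e(G) \ge \log_2 n$ the isolated-vertex hypothesis could be dropped, but the stated bound $\log_2(n+1)$ requires it.
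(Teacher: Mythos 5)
Your proof is correct. The paper itself gives no proof of this theorem --- it is quoted from Gy\'arf\'as's paper \cite{kn:gyarfas} --- and your fingerprint argument (mapping each vertex $v$ to the set of cliques of the cover containing it, showing the map avoids $\es$ because there are no isolated vertices and is injective because two vertices with the same fingerprint would be equivalent) is precisely the standard original argument, carried out correctly, including the careful handling of the closed-neighborhood cases $z\in\{x,y\}$.
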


Gy\'arf\'as result implies that the edge-clique 
cover problem is fixed-parameter 
tractable (see also~\cite{kn:gramm}). 
Cygan et al showed that, under the assumption of the 
exponential time hypothesis, there is 
no polynomial-time algorithm which reduces the 
parameterized problem $(\theta_e(G),k)$ to a kernel of size bounded 
by $2^{o(k)}$. In their proof the authors make use of the fact that 
$\theta_e(cp(2^{\ell}))$ is a [sic] 
``hard instance for the edge-clique cover problem, at least from a 
point of view of the currently known algorithms.''    
Note that, in contrast, the parameterized edge-clique partition 
problem can be reduced to a kernel with at most $k^2$ 
vertices~\cite{kn:mujuni}. (Mujuni and Rosamond also mention that 
the edge-clique cover problem probably has no polynomial kernel.)   

\section{Rankwidth of edge-clique graphs of cocktail parties}

\begin{definition}
The cocktail party graph $cp(n)$ is the complement of a 
matching with $2n$ vertices. 
\end{definition}

Notice that a cocktail party graph has no 
equivalent vertices. Thus, by Theorem~\ref{gyarfas}, 
\[\theta_e(cp(n)) \geq log_2(2n+1).\] 
For the cocktail party graph 
an exact formula for $\theta_e(cp(n))$ 
appears in~\cite{kn:gregory}. 
In that paper Gregory and Pullman prove that 
\[\lim_{n \rightarrow \infty} \; \frac{\theta_e(cp(n))}{\log_2(n)}=1.\] 

\bigskip 
   
\begin{definition}
Let $G=(V,E)$ be a graph. The edge-clique graph $K_e(G)$ has 
as its vertices the edges of $G$ and two vertices of 
$K_e(G)$ are adjacent when the corresponding edges in $G$ are 
contained in a clique. 
\end{definition}

Albertson and Collins prove that there is a 1-1 correspondence 
between the maximal cliques in $G$ and $K_e(G)$~\cite{kn:albertson}. 
The same holds true 
for the intersections of maximal cliques in $G$ and in $K_e(G)$. 

\bigskip 

For a graph $G$ we denote the vertex-clique cover number of 
$G$ by $\kappa(G)$. 
Thus 
\[\kappa(G) = \chi(\Bar{G}).\]  
Notice that, for a graph $G$,  
\[\theta_e(G)=\kappa(K_e(G)).\] 

\bigskip 

Albertson and Collins mention the following result (due to 
Shearer)~\cite{kn:albertson} for the graphs $K_e^r(cp(n))$, 
defined inductively by $K_e^r(cp(n))=K_e(K_e^{r-1}(cp(n)))$.
\[\alpha(K_e^r(cp(n))) \leq 3\cdot (2^r)!\] 
Thus, for $r=1$, $\alpha(K_e(cp(n))) \leq 6$. 
However, the following is easily checked. 

\begin{lemma}
\label{bound alpha}
For $n \geq 2$ 
\[\alpha(K_e(cp(n))) =4.\]
\end{lemma}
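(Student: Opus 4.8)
The plan is to recast the adjacency relation of $K_e(cp(n))$ in terms of the perfect matching $M$ whose complement is $cp(n)$. Write $v'$ for the $M$-partner of $v$, so that the edges of $cp(n)$ are exactly the pairs $\{x,y\}$ with $y\neq x'$. Two elementary observations drive everything. If edges $xy$ and $xz$ of $cp(n)$ meet in the single vertex $x$, they lie in a common clique iff $yz$ is an edge, i.e.\ iff $z\neq y'$. If edges $xy$ and $ab$ of $cp(n)$ are disjoint, they lie in a common clique iff $\{x,y,a,b\}$ induces a $K_4$, and this fails exactly when some endpoint of one edge is the $M$-partner of an endpoint of the other. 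Thus an independent set $S$ of $K_e(cp(n))$ is a set of edges of $cp(n)$, any two of which ``conflict'' in one of these two ways. For the lower bound I would take, using $n\geq 2$, the four edges $12,\ 21',\ 1'2',\ 2'1$ forming the induced $4$-cycle on $\{1,1',2,2'\}$: two of these that share a vertex have $M$-partners as their remaining endpoints, and two that are disjoint contain an $M$-pair between them, so the four edges are pairwise nonadjacent in $K_e(cp(n))$ and $\alpha(K_e(cp(n)))\geq 4$.

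For the upper bound, regard an independent set $S$ as a subgraph of $cp(n)$. The single-vertex observation forces the ``other endpoints'' of the $S$-edges incident to a fixed vertex to be pairwise $M$-matched; since a vertex has only one $M$-partner, at most two $S$-edges meet any vertex, and when exactly two meet a vertex their other endpoints form an $M$-pair. Hence $S$ is a disjoint union of paths and cycles. Chasing the matching condition around a cycle shows that every cycle of $S$ has exactly four vertices $u,v,u',v'$ with edges $uv,vu',u'v',v'u$, and the same bookkeeping shows that a path component has at most three edges, and, when it has three, consists of the edges $xy,yx',x'y'$. The final ingredient I would establish is a four-vertex lemma: for every edge $ab\in S$, every edge of $S$ is incident to $\{a,a',b,b'\}$ --- an edge avoiding all four of these vertices is disjoint from $ab$ and shares no $M$-pair with it, hence is coembeddable with $ab$, contradicting independence.

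It remains to combine these. If $S$ contains a $4$-cycle on $\{u,v,u',v'\}$, then every other edge of $S$ is vertex-disjoint from this cycle (all four of its vertices already lie on two $S$-edges), hence avoids $\{u,v,u',v'\}$ and is coembeddable with $uv$; so $S$ is exactly that $4$-cycle and $|S|=4$. Otherwise $S$ is a disjoint union of paths. A three-edge path occupies a vertex set of the form $\{x,x',y,y'\}$, and the four-vertex lemma applied to one of its edges pins every edge of $S$ onto that set, so $S$ is that single path and $|S|=3$. If the longest path has two edges, with edges $xy$ and $yx'$, the four-vertex lemma for $xy$ confines every edge of every other component to the single vertex $y'$, which lies on at most two $S$-edges, so $|S|\leq 4$. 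If $S$ is a matching, the four-vertex lemma for an edge $xy$ leaves only $x'$ and $y'$ available to the remaining edges, each lying on at most one, so $|S|\leq 3$. In every case $|S|\leq 4$, which together with the construction gives $\alpha(K_e(cp(n)))=4$. I expect the main obstacle to be keeping the last case analysis organized --- in particular verifying, in each case, that the ``other components'' are genuinely vertex-disjoint from the component under examination, so that the four-vertex lemma actually constrains them --- rather than any single delicate computation.
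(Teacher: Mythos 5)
Your proof is correct and follows essentially the same route as the paper: the same $C_4$ on two matched pairs gives the lower bound, and your ``four-vertex lemma'' is exactly the paper's central observation that every edge independent from $e=\{x_i,x_j\}$ must meet $\{y_i,y_j\}$ (or $e$ itself). Your path/cycle decomposition of the independent set merely organizes the paper's rather terse case enumeration more systematically --- and, if anything, more completely.
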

\begin{proof}
Let $G$ be the complement of 
a matching $\{x_i,y_i\}$, for $i \in \{1,\dots,n\}$. 
Let $K=K_e(G)$. 
Obviously, every pair of edges in the matching induces 
an independent set with four vertices in $K$. 

\medskip 

\noindent
Consider an edge $e=\{x_i,x_j\}$ in $G$. The only 
edges in $G$ that are not adjacent to $e$ in $K$, must have 
endpoints in $y_i$ or in $y_j$. Consider an edge 
$f=\{y_i,y_k\}$ for some $k \notin \{i,j\}$. The only other 
edge incident with $y_i$, 
which is not adjacent in $K$ to $f$ nor to $e$ is 
$\{y_i,x_k\}$. 

\medskip 

\noindent
The only edge incident with $y_j$ which is not adjacent to 
$e$ nor $f$ is $\{y_j,x_i\}$. This proves the lemma. 
\qed\end{proof}

\bigskip 

\begin{definition}
A class of graphs $\mathcal{G}$ is $\chi$-bounded if there 
exists a function $f$ such that for every graph $G \in \mathcal{G}$, 
\[\chi(G) \leq f(\omega(G)).\]
\end{definition}
Dvo\v{r}\'ak and Kr\'al proved that the class of graphs 
with rankwidth at most $k$ is $\chi$-bounded~\cite{kn:dvorak}. 

\bigskip 

We now easily obtain our result. 

\begin{theorem}
The class of edge-clique graphs of cocktail parties has 
unbounded rankwidth. 
\end{theorem}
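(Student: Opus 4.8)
The plan is to derive a contradiction from the three ingredients assembled above. Suppose the class $\{K_e(cp(n)) : n \geq 2\}$ had rankwidth bounded by some constant $k$. Rankwidth is (almost) self-complementary: over $\mathrm{GF}(2)$, replacing an adjacency matrix $M$ by $J-M$ changes the rank of any submatrix by at most one, since the all-ones matrix has rank one; hence $\mathrm{rw}(\overline{G})\le \mathrm{rw}(G)+1$ for every graph $G$, and the complements $\overline{K_e(cp(n))}$ would then have rankwidth bounded by $k+1$. (If one prefers, this single fact about complements can simply be cited rather than reproved.)

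Next I would apply the theorem of Dvo\v{r}\'ak and Kr\'al to this class of complements: there is a function $f$ with $\chi(\overline{K_e(cp(n))}) \le f(\omega(\overline{K_e(cp(n))}))$ for all $n \geq 2$. Since cliques in a complement are exactly the independent sets of the graph, $\omega(\overline{K_e(cp(n))}) = \alpha(K_e(cp(n)))$, which equals $4$ for every $n\ge 2$ by Lemma~\ref{bound alpha}. Therefore $\chi(\overline{K_e(cp(n))}) \le f(4)$, a bound independent of $n$.

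On the other hand, $\chi(\overline{K_e(cp(n))}) = \kappa(K_e(cp(n))) = \theta_e(cp(n)) \ge \log_2(2n+1)$, using $\kappa(H)=\chi(\overline{H})$, the identity $\theta_e(G)=\kappa(K_e(G))$, and the Gy\'arf\'as lowerbound of Theorem~\ref{gyarfas} as recorded above for cocktail parties. Letting $n\to\infty$ contradicts the constant upper bound $f(4)$, so no such $k$ can exist and the rankwidth of the family is unbounded. The only non-routine step is the behaviour of rankwidth under complementation; everything else is just chaining the displayed identities, which is why the statement is indeed obtained ``easily.''
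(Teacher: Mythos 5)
Your proposal is correct and follows essentially the same route as the paper's own proof: bound the rankwidth of the complements via the $+1$ complementation fact, apply Dvo\v{r}\'ak--Kr\'al to get $\kappa(K_e(cp(n)))\le f(\alpha(K_e(cp(n))))=f(4)$, and contradict the Gy\'arf\'as lower bound $\theta_e(cp(n))=\kappa(K_e(cp(n)))\ge\log_2(2n+1)$. The only difference is that you sketch the $\mathrm{GF}(2)$ rank argument for the complementation step, which the paper simply cites from Oum.
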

\begin{proof}
It is easy to see that the rankwidth of any graph 
is at most one more than the rankwidth of its complement~\cite{kn:oum}. 
Assume that there is a constant $k$ such that 
the rankwidth of $K_e(G)$ is at most $k$ whenever $G$ is a 
cocktail party graph. Let 
\[\mathcal{K}=\{\; \overline{K_e(G)} \;|\; G \simeq cp(n), 
\;n \in \mathbb{N}\;\}.\] 
Then the rankwidth of graphs in $\mathcal{K}$ is uniformly bounded 
by $k+1$. By the result of Dvo\v{r}\'ak and Kr\'al, there exists 
a function $f$ such that 
\[\kappa(K_e(G)) \leq f(\alpha(K_e(G)))\] 
for every cocktail party graph $G$. This contradicts 
Lemma~\ref{bound alpha} and Theorem~\ref{gyarfas}. 
\qed\end{proof}

\bigskip 

\begin{remark}
It is easy to see that for cographs $G$, $K_e(G)$ is not 
necessarily perfect. For example, when $G$ is the join of 
$P_3$ and $C_4$ then $K_e(G)$ contains $C_5$ as an induced subgraph. 
\end{remark}

\section{Independent set in edge-clique graphs of cographs}

Notice that, for any graph $G$, the clique number of 
its edge-clique graph satisfies 
\[\omega(K_e(G))=\binom{\omega(G)}{2}.\] 

\bigskip 

For the independent set number $\alpha(K_e(G))$ there is 
no such relation. For example, when $G$ has no triangles 
then $K_e(G)$ is an independent set and the independent 
set problem in triangle-free graphs is NP-complete. 
We write 
\[\alpha^{\prime}(G)=\alpha(K_e(G)).\] 
We say that a subset of edges in a graph $G$ is independent if it 
induces an independent set in $K_e(G)$. In other words, 
a set $A$ of edges in $G$ is independent if no two edges of $A$ 
are contained in a clique of $G$.  

\bigskip 

A graph is trivially perfect if it does not contain $C_4$ nor 
$P_4$ as an induced subgraph. 

\begin{theorem}
If a graph $G$ is connected and trivially perfect then 
$\alpha^{\prime}(G)=\theta_e(G)$. 
\end{theorem}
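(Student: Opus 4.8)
The plan is to represent the graph by a rooted tree and then squeeze both quantities between the number of leaves of that tree. Recall the standard fact that a connected trivially perfect graph $G$ is the comparability graph of a rooted tree $T$ on vertex set $V(G)$: two vertices are adjacent in $G$ exactly when one is an ancestor of the other in $T$. (If $\lvert V(G)\rvert=1$ then $\alpha^{\prime}(G)=\theta_e(G)=0$ and we are done, so assume $\lvert V(G)\rvert\ge 2$; then the root has a child, so every leaf of $T$ is different from the root and has a parent.) Let $L$ be the number of leaves of $T$. We also use the general inequality $\alpha(H)\le\kappa(H)$, valid for every graph $H$ because an independent set meets each member of a clique cover in at most one vertex; applied to $H=K_e(G)$ it gives $\alpha^{\prime}(G)\le\theta_e(G)$. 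Hence it suffices to prove $\theta_e(G)\le L\le\alpha^{\prime}(G)$.

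For the upper bound $\theta_e(G)\le L$, for each leaf $\ell$ of $T$ let $P_\ell$ be the vertex set of the path from the root to $\ell$. Its vertices form a chain in the ancestor order, so $P_\ell$ is a clique of $G$, and I claim the family $\{P_\ell \mid \ell \text{ a leaf of } T\}$ is an edge-clique cover: given an edge $\{u,v\}$ with $u$ an ancestor of $v$, pick any leaf $\ell$ that is a descendant of $v$ (take $\ell=v$ if $v$ is itself a leaf); then $u,v\in P_\ell$. For the lower bound $\alpha^{\prime}(G)\ge L$, for each leaf $\ell$ let $e_\ell=\{p(\ell),\ell\}$, where $p(\ell)$ is the parent of $\ell$; these $L$ edges are pairwise distinct, and I claim they are independent in the sense of the paper. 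For two distinct leaves $\ell,\ell'$: a leaf has no descendant, so $\ell$ and $\ell'$ are incomparable in $T$ and hence non-adjacent in $G$; moreover neither can be the parent of the other, so the endpoint set of $e_\ell\cup e_{\ell'}$ (three or four vertices) contains the non-adjacent pair $\{\ell,\ell'\}$ and therefore is not a clique of $G$. Thus no clique of $G$ contains both $e_\ell$ and $e_{\ell'}$. Combining the two bounds with $\alpha^{\prime}(G)\le\theta_e(G)$ yields $\alpha^{\prime}(G)=\theta_e(G)=L$.

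The argument is short once the rooted-tree picture is available, so the only real obstacle is bookkeeping: verifying that the pendant-edge set is genuinely independent (the little case analysis on shared endpoints) and, above all, checking that the \emph{same} tree simultaneously delivers an edge-clique cover and an independent edge set of the common size $L$, so that the sandwich closes. A reader who wishes to avoid quoting the rooted-tree characterization can instead induct on $\lvert V(G)\rvert$ using a universal vertex $v$ of $G$: every clique of $G$ and every independent pair of edges of $G$ lies inside one of the subgraphs $G[\{v\}\cup C]$ with $C$ a component of $G-v$, which makes both $\theta_e$ and $\alpha^{\prime}$ additive over these pieces and drives the induction; the mild extra cost there is a separate treatment of the case in which $G-v$ is already connected, which is why I would present the tree version.
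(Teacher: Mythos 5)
Your proof is correct and closes essentially the same sandwich as the paper's, namely $\alpha^{\prime}(G)\le\theta_e(G)\le L$ together with $L\le\alpha^{\prime}(G)$, where your $L$ (the number of leaves of the tree) is exactly the quantity the paper works with, since for a connected trivially perfect graph the number of leaves equals both $\alpha(G)$ and the number of maximal cliques. The only real difference is one of packaging: the paper gets the two outer bounds by quoting that $\alpha(G)$ equals the number of maximal cliques for trivially perfect graphs and that $\alpha^{\prime}(G)\ge\alpha(G)$ for connected graphs, whereas you derive them explicitly (root-to-leaf paths as the clique cover, pendant leaf-edges as the independent set) from the rooted-tree comparability model, which makes the argument self-contained at the cost of invoking that characterization.
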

\begin{proof}
When a graph $G$ is trivially perfect then 
the independence number is equal 
to the number of maximal cliques in $G$. 
Therefore, $\alpha(G)= \theta_e(G)$ and since 
$G$ is connected $\alpha^{\prime}(G) \geq \alpha(G)$. 
\qed\end{proof}
 
\bigskip 

The following lemma shows that the independent set problem 
in $K_e(G)$ can be reduced to the independent set problem in $G$. 

\begin{lemma}
The computation of $\alpha^{\prime}(G)$ 
for arbitrary graphs $G$ is NP-hard.
\end{lemma}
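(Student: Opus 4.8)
The plan is to give a polynomial-time reduction from a known NP-hard problem — the natural candidate is \textsc{Independent Set} itself, or \textsc{Triangle-Free Independent Set}, exactly the observation flagged just above the lemma: if $H$ is triangle-free then $K_e(H)$ has no edges, so $\alpha'(H) = |E(H)|$ and that carries no information. Instead I would start from an arbitrary graph $H$ on which \textsc{Independent Set} is NP-hard and build a graph $G$ so that a maximum independent \emph{set of edges} in $G$ encodes a maximum independent \emph{set of vertices} in $H$.

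First I would fix the gadget. For each vertex $v \in V(H)$ attach a pendant edge $e_v = \{v, v'\}$ where $v'$ is a new degree-one vertex; call the resulting graph $G$. The pendant vertices $v'$ are pairwise non-adjacent and each $v'$ has no neighbor other than $v$, so the edge $e_v$ lies in no triangle of $G$: its only potential clique-mates would have to be common neighbors of $v$ and $v'$, and $v'$ has none. Hence in $K_e(G)$ the vertex $e_v$ is adjacent to another edge $f$ of $G$ \emph{only if} $f$ is incident with $v$ — and even then only if $f$ together with $e_v$ sits in a clique, which forces $f = e_v$ is false, so actually $e_v$ is adjacent to \emph{no} other edge at all. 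That makes the pendant edges useless as a selector; so I would instead use a triangle gadget: for each $v$ add two new vertices $a_v, b_v$ and edges $\{v,a_v\}, \{v,b_v\}, \{a_v,b_v\}$, with $a_v, b_v$ adjacent to nothing else. Now the three new edges at $v$ plus any original edge at $v$ interact, and picking the ``private'' edge $\{a_v,b_v\}$ versus an edge of $H$ incident to $v$ becomes a genuine choice.

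The core of the argument is then the correspondence. Two original edges $\{u,x\}, \{u,y\}$ of $H$ sharing endpoint $u$ are non-adjacent in $K_e(G)$ iff $\{x,y\}\notin E(H)$; two original edges with no common endpoint are non-adjacent iff they do not lie in a common clique of $H$. The private triangle edge $\{a_v,b_v\}$ is non-adjacent in $K_e(G)$ to every edge not incident to $v$, and non-adjacent to the edges $\{v,a_v\},\{v,b_v\}$ is false (they share the triangle), and it is adjacent to an original edge at $v$. So I claim: $\alpha'(G) = |V(H)| + \beta$ where $\beta$ is some quantity tied to the independence structure of $H$ — concretely, an optimal independent edge set takes exactly one edge from each vertex-triangle and the only freedom that pays off is swapping $\{a_v,b_v\}$ for an $H$-edge at $v$ when that $H$-edge can be combined with a matching-like independent family. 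I would prove both inequalities: given an independent set $S$ in $H$, select for $v\in S$ an incident $H$-edge (after also subdividing or bounding degrees so these $H$-edges are pairwise independent) and for $v\notin S$ the private edge; conversely, from an independent edge set recover a vertex set by reading off which triangles gave up their private edge, and argue those vertices form (or can be shrunk to) an independent set in $H$.

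The main obstacle I anticipate is getting the \emph{bookkeeping of shared-endpoint and non-adjacent original edges} to behave: two $H$-edges at the same vertex are independent in $K_e(G)$ precisely when their far endpoints are non-adjacent, which is almost the complement structure I want, but two disjoint $H$-edges being independent depends on a $4$-vertex clique condition in $H$, and controlling that may require first reducing to triangle-free or bounded-degree host graphs (where \textsc{Independent Set} stays NP-hard) so that every pair of incident $H$-edges is automatically independent and the optimum has a clean closed form. Once the host class is chosen so that ``independent set of edges incident to an independent set of vertices'' is forced, the two inequalities $\alpha'(G)\ge |V(H)| + \alpha(H)$ and $\alpha'(G)\le |V(H)| + \alpha(H)$ (or whatever the exact shift is) should both follow by the local exchange arguments above, giving $\alpha(H) = \alpha'(G) - |V(H)|$ and hence NP-hardness of computing $\alpha'$.
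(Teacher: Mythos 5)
Your reduction does not work as described, because the triangle gadget creates no conflicts in $K_e(G)$. The new vertices $a_v,b_v$ are adjacent only to $v$ and to each other, so no clique of $G$ contains a gadget edge together with an edge of $H$: for $\{v,a_v\}$ and an $H$-edge $\{v,u\}$ to be adjacent in $K_e(G)$ you would need $a_v$ and $u$ to be adjacent, and they are not. Hence each vertex-triangle is an isolated triangle in $K_e(G)$, contributing exactly one vertex to every maximum independent set no matter what happens elsewhere, and your construction yields $\alpha'(G)=|V(H)|+\alpha'(H)$ --- a circular identity, not $|V(H)|+\alpha(H)$. The ``genuine choice'' you describe between the private edge $\{a_v,b_v\}$ and an $H$-edge at $v$ does not exist, since those two edges never lie in a common clique. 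More fundamentally, nothing in your construction makes the selection of $u$ and the selection of $v$ mutually exclusive when $uv\in E(H)$, which is precisely what a reduction from \textsc{Independent Set} must engineer; your proposed fallback of passing to triangle-free or bounded-degree hosts does not repair this (for triangle-free $H$ your $G$ gives $\alpha'(G)=|V(H)|+|E(H)|$, still independent of $\alpha(H)$).

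The paper supplies the missing device: a single new vertex $x$ adjacent to all vertices of the input graph. Then $\{x,u\}$ and $\{x,v\}$ lie in a common clique (the triangle on $x,u,v$) exactly when $uv$ is an edge, so an independent set of edges incident with $x$ is literally an independent set of vertices of the input graph. The second difficulty you correctly anticipated --- original edges of the input graph contaminating the count --- is handled there by attaching two simplicial vertices to each original edge, which makes every original edge strictly dominated (exchanging it for the two simplicial edges at that edge gains one), giving the clean formula $\alpha'=2m+\alpha$ for the constructed graph. So your diagnosis of the obstacles is partly right, but the central idea that makes the reduction go through (the universal vertex as a vertex-to-edge selector) is absent, and the gadget you do propose provably cannot encode $\alpha(H)$.
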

\begin{proof}
Let $G$ be an arbitrary graph.
Construct a graph $H$ as follows. At every edge in $G$ add two
simplicial vertices, both adjacent to the two endvertices of the edge.
Add one extra vertex $x$ adjacent to all vertices of $G$.
Let $H$ be the graph constructed in this way. 

\medskip 

\noindent
Notice that a maximum set of independent edges does not
contain any edge of $G$ since it would be better to replace
such an edge by two edges incident with the two
simplicial vertices at this edge.
Also notice that a set of independent edges incident with $x$
corresponds with an independent set of vertices in $G$.
Hence 
\[\alpha^{\prime}(H) = 2m + \alpha(G),\] 
where
$m$ is the number of edges of $G$. 
\qed\end{proof}

\bigskip 

A cograph is a graph without induced $P_4$. It is well-known that 
a graph is a cograph if and only if every induced subgraph with 
at least two vertices is either a join or a union of two 
smaller cographs. It follows that a cograph $G$ has a decomposition 
tree $(T,f)$ where $T$ is a rooted binary tree and $f$ is a bijection 
from the vertices of $G$ to the leaves of $T$. Each internal node 
of $T$, including the root, is labeled as $\otimes$ or $\oplus$. 
The $\otimes$-node joins the two subgraphs mapped to the left 
and right subtree. The $\oplus$ unions the two subgraphs. 
When $G$ is a cograph then a decomposition tree as described 
above can be obtained in linear time~\cite{kn:corneil}. 
  
\bigskip 

\begin{lemma}
Let $G$ be a cograph. Assume that $G$ is the join of two smaller 
cographs $G_1$ and $G_2$. Then any edge in $G_1$ is adjacent in 
$K_e(G)$ to any edge in $G_2$. 
\end{lemma}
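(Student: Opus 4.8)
The plan is to unwind the definitions. Recall that a vertex of $K_e(G)$ is an edge of $G$, and two such vertices are adjacent in $K_e(G)$ precisely when the two edges lie together in a clique of $G$. So to prove that any edge $e_1$ in $G_1$ is adjacent in $K_e(G)$ to any edge $e_2$ in $G_2$, it suffices to exhibit a clique of $G$ that contains both $e_1$ and $e_2$.

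First I would write $e_1=\{a,b\}$ with $a,b\in V(G_1)$ and $e_2=\{c,d\}$ with $c,d\in V(G_2)$. Since $G$ is the join $G_1\otimes G_2$, every vertex of $G_1$ is adjacent in $G$ to every vertex of $G_2$; in particular $a,b$ are each adjacent to each of $c,d$. Combined with the edges $ab$ and $cd$ themselves, this shows that the four vertices $a,b,c,d$ are pairwise adjacent, i.e. they induce a $K_4$ (or a $K_3$, in the degenerate case $e_1$ and $e_2$ share a vertex, which cannot happen here since $V(G_1)\cap V(G_2)=\es$, but the argument is robust either way). Hence $\{a,b,c,d\}$ is a clique of $G$ containing both $e_1$ and $e_2$, so $e_1$ and $e_2$ are adjacent in $K_e(G)$.

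There is essentially no obstacle here: the statement is an immediate consequence of the definition of the join and of $K_e(G)$. The only thing to be careful about is the edge-case bookkeeping — making sure $e_1\neq e_2$ (they live in disjoint vertex sets, so this is automatic) and noting that $a,b,c,d$ really are four distinct vertices, again automatic from $V(G_1)\cap V(G_2)=\es$. I would therefore present this as a two-line proof, and the real payoff is the corollary it enables: in the cograph decomposition, at a $\otimes$-node the independent edge sets of the two sides cannot be combined, so $\alpha'(G_1\otimes G_2)=\max\{\alpha'(G_1),\alpha'(G_2)\}$ unless one uses a ``crossing'' edge of the join, which is the case the subsequent development will have to handle.
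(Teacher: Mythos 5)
Your proof is correct and is essentially identical to the paper's one-line argument: the four endpoints of the two edges induce a clique in the join, so the edges are adjacent in $K_e(G)$. The extra bookkeeping about disjointness of $V(G_1)$ and $V(G_2)$ is fine but not needed beyond what the paper already assumes.
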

\begin{proof}
Let $e_1$ and $e_2$ be edges in $G_1$ and $G_2$, respectively. 
Then the four endpoints induce a clique in $G$. 
\qed\end{proof}

\bigskip 

For a vertex $x$, let $d^{\prime}(x)$ be the independence 
number of the subgraph of $G$ induced by $N(x)$, that is,  
\begin{equation}
\boxed{d^{\prime}(x)=\alpha(G[N(x)]).}
\end{equation}  

\bigskip 
 
\begin{theorem}
When $G$ is a cograph then 
\begin{equation}
\label{eqn0}
\alpha^{\prime}(G)=\max \;\{\;\sum_{x \in A} d^{\prime}(x)\;|\; 
\text{$A$ is an independent set 
in $G$}\;\}. 
\end{equation}
There exists an $O(n^4)$ algorithm that computes  
the independence number of $K_e(G)$ for cographs $G$. Here $n$ is 
the number of vertices of $G$.  
\end{theorem}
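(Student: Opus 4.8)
The plan is to prove the two inequalities of~\eqref{eqn0} separately and then read off the algorithm.

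The inequality ``$\ge$'' holds for \emph{every} graph $G$. Given an independent set $A$ of $G$, pick for each $x\in A$ a maximum independent set $S_x$ of $G[N(x)]$ (so $|S_x|=d'(x)$) and form the edges $F=\{\,\{x,y\}:x\in A,\ y\in S_x\,\}$. A short case check shows that $F$ is independent in $K_e(G)$: two of its edges that share a vertex $x$ have their other endpoints in the independent set $S_x$, so they span no triangle; two edges with distinct endpoints $x\neq x'$ in $A$ cannot lie in a common clique since $x\not\sim x'$; and all these edges are distinct. Hence $\alpha'(G)\ge |F|=\sum_{x\in A}d'(x)$, and we maximize over $A$.

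For ``$\le$'' I would induct on the cograph decomposition tree. The union case $G=G_1\oplus G_2$ is immediate: cliques of $G$ lie in one part, so a maximum independent edge set splits as $M_1\sqcup M_2$ with $M_i$ independent in $K_e(G_i)$, while $d'_G$ agrees with $d'_{G_i}$ on $V(G_i)$ and the independent sets of $G$ are exactly the unions of those of $G_1$ and $G_2$, so both sides of~\eqref{eqn0} are additive. The join case $G=G_1\otimes G_2$ is the heart of the matter. By the lemma just above, every edge of $G_1$ conflicts with every edge of $G_2$ in $K_e(G)$, so a maximum independent edge set $M$ has, say, no edge inside $G_2$; write $M=M_1\sqcup M_C$ with $M_1\subseteq E(G_1)$ and $M_C$ the crossing edges. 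I would then record: (a) every independent set of $G$ is contained in $V(G_1)$ or in $V(G_2)$, and since $G[N_G(x)]=G_i[N_{G_i}(x)]\otimes G_{3-i}$ we get $d'_G(x)=\max(d'_{G_i}(x),\alpha(G_{3-i}))$ for $x\in V(G_i)$, so the right side of~\eqref{eqn0} for $G$ equals $\max_i\max\{\sum_{x\in A}\max(d'_{G_i}(x),\alpha(G_{3-i})):A\text{ independent in }G_i\}$; (b) grouping $M_C$ by its endpoint $u\in V(G_1)$, the $V(G_2)$-partners of the edges at $u$ form an independent set of $G_2$, and for $u\sim_{G_1}u'$ the partners of $u$ together with those of $u'$ form an independent set of $G_2$, so, since cographs are perfect, covering $G_1[U']$ by $\alpha(G_1[U'])$ cliques (where $U'$ is the set of used $G_1$-endpoints) gives $|M_C|\le\alpha(G_1[U'])\cdot\alpha(G_2)$; (c) every $u\in U'$ lies outside $V(M_1)$ and is non-adjacent to at least one endpoint of each edge of $M_1$, since otherwise the crossing edge at $u$ would share a clique with that $M_1$-edge. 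It remains to bound $|M_1|+|M_C|$ by the maximum in~(a); I would do this by an inner induction on $G_1$, strengthening the statement to carry along a set of ``charged'' vertices with clique-bounded weights (the data of (b)–(c)) and exploiting the freedom in~(c) to choose, for each edge of $M_1$, an endpoint avoiding the charged vertices.

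I expect this merging step — combining the recursive bound on $M_1$ with the crossing edges into a single independent set of $G_1$ — to be the main obstacle. It is tempting to argue instead that one can always charge each edge of $M$ to one of its endpoints so that the charged vertices form an independent set of $G$; this is true for a \emph{maximum} $M$ but false for general independent edge sets (for instance in $C_4\otimes\overline{K_2}$ with $M$ a suitable set of three edges), so the argument genuinely needs the slack in the weighting $\max(\cdot,\alpha(G_{3-i}))$ together with careful inductive bookkeeping. Finally, for the algorithm: a decomposition tree is computed in linear time; each $d'(x)=\alpha(G[N(x)])$ is the independence number of a cograph, computed in polynomial time by building $G[N(x)]$ and running the linear-time maximum-independent-set algorithm on its own decomposition tree, so all of them together in $O(n^4)$ time; and $\max_A\sum_{x\in A}d'(x)$ is a maximum-weight independent set in the cograph $G$ with vertex weights $d'(x)$, obtained in linear time by a bottom-up pass on the tree (sum the children's values at a $\oplus$-node, take the maximum at a $\otimes$-node). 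This yields the claimed $O(n^4)$ bound.
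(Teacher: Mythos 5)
Your ``$\ge$'' direction is correct and complete (and indeed holds for arbitrary graphs), the union case of ``$\le$'' is fine, and the algorithmic part is routine once the formula is known. The genuine gap is in the join case of ``$\le$'', and you flag it yourself: after splitting a maximum independent edge set of $G_1\otimes G_2$ into $M_1\subseteq E(G_1)$ and the crossing edges $M_C$, and after the (correct) observations (a)--(c) and the bound $|M_C|\le\alpha(G_1[U'])\cdot\alpha(G_2)$, you still must combine the inductive bound on $|M_1|$ --- witnessed by some independent set of $G_1$ --- with the independent set of $G_1[U']$ witnessing the bound on $|M_C|$ into a \emph{single} independent set $A$ of $G_1$ with $\sum_{x\in A}\max(d'_{G_1}(x),\alpha(G_2))\ge|M_1|+|M_C|$. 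These two witnesses need not be compatible, so the two bounds cannot simply be added; this merging step is the entire content of the theorem, and ``an inner induction carrying charged vertices'' is a direction, not a proof. As written, the inequality ``$\le$'' is not established.

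For comparison, the paper avoids the merge by strengthening the induction hypothesis before descending into the cotree: it defines a two-parameter quantity $\alpha'(A,B)$, the maximum size of an independent set of edges of $G[A\cup B]$ having no edge with both endpoints in $B$, and proves recurrences in which, at a join node of $A$, one of the two halves of $A$ is absorbed into the second argument $B$ (taking the maximum over the two choices), with base case $\alpha'(A,B)=\alpha(G[B])$ when $|A|=1$; Equation~(\ref{eqn0}) then follows by induction. Your proposed ``charged vertices with clique-bounded weights'' are morally this second argument $B$, so your plan is salvageable, but you would need to state the strengthened two-parameter invariant precisely and prove its recurrences --- in particular, that at a join node an optimal solution never needs crossing edges into $B$ from both sides of the join simultaneously --- which is exactly the step you left open.
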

\begin{proof}
Let $G$ be a cograph. 
The algorithm first computes a decomposition tree $(T,f)$ 
for $G$ in linear time. For each node $p$ of $T$ 
let $G_p$ be the subgraph induced by the set of 
vertices that are mapped to leaves in the subtree rooted at $p$.   

\medskip 

\noindent
Notice that the independence 
number of each $G_p$ can be computed in linear time as follows. 

\medskip 

\noindent
Let $p$ be an internal node and let $c_1$ and $c_2$ be the two 
children of $p$. For $i \in \{1,2\}$, 
write $G_i$ instead of $G_{c_i}$. 
Let $p$ be labeled with $\otimes$ and let $G_p$ be the 
join of $G_1$ and $G_2$. 
Then 
\[\alpha(G_p) = \max\;\{\;\alpha(G_1),\;\alpha(G_2)\;\}.\] 
Assume that $p$ is labeled with $\oplus$. Then let $G_p$ be the 
union of $G_1$ and $G_2$. 
In that case 
\[\alpha(G_p)=\alpha(G_1)+\alpha(G_2). \] 

\medskip 

\noindent
Consider two disjoint, nonempty subsets of vertices, $A$ and $B$, 
such that the graph $G[A \cup B]$ is either a join or a union 
of $G[A]$ and $G[B]$.  
Let $\alpha^{\prime}(A,B)$ be the maximal cardinality of 
an independent set of 
edges in $G[A \cup B]$ such that no element has both endpoints 
in $B$. 
Assume that $G[A\cup B]$ is the union of $G[A]$ and $G[B]$. 
Then 
\begin{equation}
\label{eqn1}
\alpha^{\prime}(A,B)=\alpha^{\prime}(G[A]).
\end{equation}
Assume that $G[A \cup B]$ is the join of $G[A]$ and $G[B]$. 
Then consider the following three cases. 
First assume that $G[A]$ is the union of two smaller cographs 
$G[A_1]$ and $G[A_2]$. In that case 
\begin{equation}
\label{eqn2}
\alpha^{\prime}(A,B)=\alpha^{\prime}(A_1,B)+\alpha^{\prime}(A_2,B).
\end{equation}

\medskip 

\noindent
Consider the case where $G[A]$ is the join of two smaller 
cographs $G[A_1]$ and $G[A_2]$. 
In that case 
\begin{equation}
\label{eqn3}
\alpha^{\prime}(A,B)=\max\;\{\;\alpha^{\prime}(A_1,B\cup A_2),\;
\alpha^{\prime}(A_2,B\cup A_1)\;\}.
\end{equation}

\medskip 

\noindent 
Finally, assume that $|A|=1$. In that case 
\begin{equation}
\label{eqn4}
\alpha^{\prime}(A,B)=\alpha(G[B]).
\end{equation}

\medskip 

\noindent
Now, Equation~(\ref{eqn0}) easily follows by induction 
from the recurrences (\ref{eqn1}), (\ref{eqn2}), (\ref{eqn3}) 
and (\ref{eqn4}). 
It is easy to see that this can be computed in $O(n^4)$ time. 
\qed\end{proof}

\bigskip 

\begin{remark}
Notice that Formula~(\ref{eqn0}) confirms 
Lemma~\ref{bound alpha}.
\end{remark}

\bigskip 

\begin{remark}
The independence number of $K_e(G)$ equals $\theta_e(G)$ for 
graphs $G$ that are chordal. For interval graphs the 
edge-clique cover number  
$\theta_e(G)$ equals the number of 
maximal cliques~\cite{kn:scheinerman}.
\end{remark}

\subsection{Distance-hereditary graphs}

In this section we briefly look at the independence number 
of edge-clique graphs of distance-hereditary graphs. 

\bigskip 

A graph $G$ is distance hereditary if the distance 
between any two nonadjacent vertices, 
in any connected induced subgraph of $G$, 
is the same as their distance in the $G$~\cite{kn:howorka}. 
Bandelt and Mulder obtained the following 
characterization of distance-hereditary graphs. 

\begin{lemma}[\cite{kn:bandelt}]
A graph is distance hereditary if and only if every induced subgraph 
has an isolated vertex, a pendant vertex or a twin.
\end{lemma}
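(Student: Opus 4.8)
The plan is to prove both directions by induction on $|V(G)|$, organised around the four ``one-vertex extensions'': adding an isolated vertex, adding a pendant vertex, adding a \emph{true twin} (a new vertex with the same closed neighborhood as an existing one), and adding a \emph{false twin} (a new vertex with the same open neighborhood as an existing one). Observe first that the class of distance-hereditary graphs is hereditary: an induced subgraph of an induced subgraph is an induced subgraph, and a disconnected induced subgraph is handled componentwise. Hence for the ``only if'' direction it suffices to prove that every distance-hereditary graph with at least two vertices contains an isolated vertex, a pendant vertex, or a pair of twins (a one-vertex graph is itself an isolated vertex, which disposes of the induced subgraphs of size one).

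For the ``if'' direction, the main step is an \emph{extension lemma}: if $H$ is distance hereditary and $G$ arises from $H$ by one of the four operations, then $G$ is distance hereditary. This is a direct verification of the distance condition for each connected induced subgraph $G'$ of $G$. If the new vertex $v$ lies outside $G'$ there is nothing to prove, since none of the four operations creates a shortcut between two old vertices (a pendant vertex has degree one, and a twin can always be swapped for its partner along a shortest path). If $v$ lies in $G'$, the same observations reduce any shortest path of $G'$ through $v$ to a shortest path of the induced subgraph $G'-v$ of $H$, possibly shifted by one, so the induction hypothesis applies. Granting the extension lemma, the ``if'' direction is immediate: the hypothesis applied to $G$ itself yields a vertex $v$ of one of the three types; the graph $G-v$ still satisfies the hypothesis (each of its induced subgraphs is an induced subgraph of $G$), hence is distance hereditary by induction, and $G$ is recovered from $G-v$ by one of the four operations.

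For the ``only if'' direction I would take a distance-hereditary graph $G$ with $|V(G)|\ge 2$; passing to a component I may assume $G$ is connected, and (otherwise we are done) without isolated or pendant vertices, and then hunt for twins. Fix a vertex $v$ and let $L_0=\{v\},L_1,\dots,L_k$ be its breadth-first layers, with $k\ge 1$. The heart of the proof is a structural lemma on these layers, whose proof is where the distance-hereditary hypothesis is really used: since $G$ has no induced $C_n$ for $n\ge 5$ and no induced ``house'', ``gem'' or ``domino'' (these exclusions follow from the distance condition applied to paths between the layers), the vertices of any one connected component $C$ of the last layer $G[L_k]$ all have the same neighborhood $S\subseteq L_{k-1}$ and no neighbors outside $C\cup S$. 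Consequently a pair of vertices of $C$ are twins in $C$ exactly when they are twins in $G$. If $|C|\ge 2$, then $C$ is a smaller connected distance-hereditary graph without twins, so by induction it has a pendant vertex; and if $|C|=1$, its unique vertex has at least two neighbors in $L_{k-1}$. In each of these remaining situations one argues, once more via the absence of the small forbidden configurations, that two vertices of $L_{k-1}$ have the same neighborhood in $G$, which produces the required twin pair.

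I expect this structural lemma on breadth-first layers, together with the endgame just sketched (the pendant-in-$C$ and the singleton-$C$ cases), to be the main obstacle: it amounts to establishing the forbidden-induced-subgraph characterization of distance-hereditary graphs and then extracting twins from it. By comparison the extension lemma and the reduction to the connected case are routine; this technical core is what Bandelt and Mulder carry out in~\cite{kn:bandelt}.
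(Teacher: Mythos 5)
This lemma is not proved in the paper at all: it is imported verbatim from Bandelt and Mulder~\cite{kn:bandelt}, so there is no in-paper argument to compare yours against. Your outline does reproduce the architecture of the proof in that source: the ``if'' direction by showing that the four one-vertex extensions (isolated, pendant, true twin, false twin) preserve the distance condition, and the ``only if'' direction by analysing a breadth-first layering with the help of the forbidden configurations (house, holes of length at least five, domino, gem). The extension lemma and the reduction to the connected case are indeed routine as you say; the only points needing explicit care are an induced subgraph that contains the new twin but not its partner (handled by the swap isomorphism you allude to) and a shortest path containing both twins (which forces them to be at distance two and is easily discharged). Note also that the direction of the forbidden-subgraph characterization you actually use, namely that a distance-hereditary graph contains none of these configurations, is the easy one: each of them has a pair of vertices whose distance grows in a connected induced subgraph.

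The one place where your sketch would not go through as written is the endgame of the ``only if'' direction. In the subcase where a component $C$ of the last layer $L_k$ has $|C|\ge 2$ and no twin pair, you propose to extract a pendant vertex of $C$ by induction and then to locate two vertices of $L_{k-1}$ with the same neighborhood; that is not how this case closes, and you give no reason why such a pair in $L_{k-1}$ should exist. The case is in fact void: the common down-neighborhood $S$ of $C$ is nonempty (every vertex of $L_k$ has a neighbor in $L_{k-1}$), any $s\in S$ is adjacent to all of $C$, so an induced $P_4$ in $C$ together with $s$ would be a gem; hence $C$ is $P_4$-free, i.e.\ a cograph, and every cograph on at least two vertices contains a twin pair, which by your own observation about $S$ is a twin pair in $G$. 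Only the subcase $|C|=1$ with $|S|\ge 2$ genuinely requires hunting for twins inside $L_{k-1}$. Finally, be aware that the ``structural lemma'' you lean on --- that the absence of house, hole, domino and gem forces all vertices of a component of a level to have the same neighbors in the previous level --- is the bulk of Bandelt and Mulder's work; as presented it is an assertion, not a proof, so your write-up is an outline of~\cite{kn:bandelt} rather than a self-contained argument.
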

The papers~\cite{kn:bandelt} and~\cite{kn:howorka} 
also contain a characterization of 
distance-hereditary graphs in terms of forbidden induced 
subgraphs. 

\bigskip 

\begin{theorem}
Let $G$ be distance hereditary. 
Then $\alpha^{\prime}(G)$ satisfies 
Equation~{\rm (\ref{eqn0})}. This value 
can be computed in polynomial time. 
\end{theorem}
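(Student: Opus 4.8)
The plan is to prove the two inequalities implicit in Equation~(\ref{eqn0}) separately; write $\beta(G)=\max\{\sum_{x\in A}d'(x)\mid A\text{ is an independent set of }G\}$ for its right-hand side. The inequality $\alpha'(G)\ge\beta(G)$ holds for every graph $G$: given an independent set $A$ and, for each $x\in A$, an independent set $S_x\subseteq N(x)$ of $G[N(x)]$ with $|S_x|=d'(x)$, the edge set $\bigcup_{x\in A}\{\{x,s\}\mid s\in S_x\}$ is independent in $K_e(G)$, since two of its edges through a common center $x$ have non-adjacent other endpoints, and two of its edges with distinct centers $x\neq y$ cannot lie in a common clique of $G$ because $x\not\sim y$; this set has $\sum_{x\in A}d'(x)$ edges. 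Hence the work lies in the reverse inequality $\alpha'(G)\le\beta(G)$.

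For this I would induct on $|V(G)|$ using the Bandelt--Mulder characterization quoted above: $G$ has an isolated vertex, a pendant vertex, or a twin. If $v$ is isolated, then $K_e(G)=K_e(G-v)$, $d'(v)=0$, and $\beta(G)=\beta(G-v)$. If $v$ is pendant at $u$, then the edge $\{u,v\}$ lies in no triangle of $G$ and is therefore an isolated vertex of $K_e(G)$, so $\alpha'(G)=\alpha'(G-v)+1$; on the other side $d'_G(v)=1$, $d'_G(u)=d'_{G-v}(u)+1$ (because $v$ is isolated in $G[N_G(u)]$), and $d'_G(x)=d'_{G-v}(x)$ for every other $x$, and a short case distinction on whether an optimal $A$ contains $v$, contains $u$, or neither, yields $\beta(G)=\beta(G-v)+1$. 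In both cases the induction hypothesis closes the step.

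The twin case is the main obstacle, and the false-twin subcase in particular. For a true twin pair $u\sim v$, the edges $\{v,w\}$ and $\{u,w\}$ are adjacent in $K_e(G)$, so no optimal $A$ can contain both $u$ and $v$, and the changes in $\alpha'$ and in the $d'$-values upon deleting $v$ are easy to match. For a false twin pair $u\not\sim v$ with $N(u)=N(v)$, however, no edge at $v$ is adjacent in $K_e(G)$ to an edge at $u$, so deleting $v$ can lower $\alpha'$ by as much as $d'_G(v)=d'_G(u)$ and can also raise $d'_G(w)$ above $d'_{G-v}(w)$ for $w\in N(u)$; one has to show that these changes are exactly absorbed by the new freedom in $\beta$ of putting both $u$ and $v$ into $A$, which is legal precisely because $u\not\sim v$. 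This is where the absence of the forbidden induced subgraphs of distance-hereditary graphs enters. One clean symptom of why the accounting balances is that for any maximum independent edge set $S$ of a distance-hereditary $G$, the subgraph $(V(S),S)$ of $G$ is bipartite: an odd cycle in it either contains a triangle two of whose edges lie in $S$ (contradicting independence of $S$) or, shrinking along a minimal chord and using that $G$ has no induced $C_4$ and no longer induced cycle, induces a forbidden chordless cycle in $G$. Turning this into a vertex cover of $(V(S),S)$ that is independent in $G$, and reading off from it a set $A$ with $\sum_{x\in A}d'(x)\ge|S|$, is the genuinely delicate point; in the inductive proof it is carried out locally at the false twin. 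An alternative is to rerun the cograph argument of the preceding theorem on the split decomposition of $G$, whose prime pieces are cliques and stars.

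Finally, once Equation~(\ref{eqn0}) is established, polynomial-time computability of $\alpha'(G)$ is immediate: each $d'(x)=\alpha(G[N(x)])$ is computable in polynomial (indeed linear) time because $G[N(x)]$ is again distance hereditary, and maximizing $\sum_{x\in A}d'(x)$ over the independent sets $A$ of $G$ is a maximum-weight independent set computation, which is polynomial on distance-hereditary graphs; composing the two steps yields the claimed algorithm.
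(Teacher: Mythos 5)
Your overall strategy is the same as the paper's: the inequality $\alpha'(G)\ge\beta(G)$ is easy and holds for all graphs, and the reverse inequality is attacked by induction on $|V(G)|$ via the Bandelt--Mulder characterization, with the isolated and pendant cases handled exactly as you handle them. But you have not actually proved the twin cases: you explicitly defer the false-twin case as ``the genuinely delicate point,'' and the route you sketch for it is itself incomplete. Even granting that a maximum independent edge set $S$ spans a bipartite subgraph $(V(S),S)$, a colour class of that bipartition is only guaranteed to contain no edge of $S$; it need not be an independent set of $G$, so it does not directly yield a set $A$ over which $\sum_{x\in A}d'(x)\ge|S|$ can be read off. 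Since the twin cases are where all the content of the theorem lives, this is a genuine gap rather than a presentational one.

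The paper closes the false-twin case with a symmetrization argument you did not find. Let $x$ and $y$ be false twins and let $A$ be a maximum independent edge set with $|A(x)|\ge|A(y)|$, where $A(v)$ denotes the edges of $A$ incident with $v$; writing $\Omega(x)$ for the far endpoints of the edges of $A(x)$, replace $A(y)$ by $\{\,\{y,z\}\mid z\in\Omega(x)\,\}$. Independence is preserved because $N(x)=N(y)$ allows one to substitute $x$ for $y$ in any clique, so a clique putting a new edge $\{y,z\}$ together with some other edge of $A$ would produce one containing $\{x,z\}\in A(x)$ and that edge, contradicting independence of $A$; and the new set is no smaller. After this replacement $x$ and $y$ carry identical edge sets of $A$, which is what makes the induction on $G-x$ balance against the fact that the right-hand side of Equation~(\ref{eqn0}) may put both $x$ and $y$ into $A$ (they are nonadjacent and have equal $d'$-values). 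An analogous merging of $A(x)$ and $A(y)$ onto one vertex handles true twins, where at most one of $x,y$ can lie in an independent set. You would need to supply these substitution arguments (or a fully worked-out version of your bipartiteness route) for the proof to be complete; your final paragraph on polynomial-time computability is fine.
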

\begin{proof}
Consider an isolated vertex $x$ in $G$. Then $A$ is a maximum 
independent set of edges in $G$ if and only if $A$ is a 
maximum independent set of edges in the graph $G-x$. 
By induction, Equation~(\ref{eqn0}) is valid for $G$. 

\medskip 

\noindent
Let $x$ be a pendant vertex and let $y$ be the 
unique neighbor of $x$ in $G$. Since $\{x,y\}$ is 
not in any triangle, the edge $\{x,y\}$ is in any 
maximal independent set of edges in $G$. 
Therefore, 
\[\alpha^{\prime}(G)=1+\alpha^{\prime}(G-x).\] 
Let $Q$ be an independent set which maximizes 
Equation~(\ref{eqn0}) for $G-x$. If $y \in Q$ 
then $d^{\prime}(y)$ goes up 
by one when adding the vertex $x$. If $y \notin Q$, 
then $Q \cup \{x\}$ is an independent set in $G$ 
and $d^{\prime}(x)=1$. 

\medskip 

\noindent
Let $x$ be a false twin of a vertex $y$ in $G$. 
Let $A$ be a maximum independent set of edges in $G$. 
Let $A(x)$ and $A(y)$ be the  
sets of edges in $A$ that are incident with $x$ and $y$. 
Assume that $|A(x)| \geq |A(y)|$. Let $\Omega(x)$ 
be the set of endvertices in $N(x)$ of edges in $A(x)$.  
then we may replace the set $A(y)$ 
with the set 
\[\{\;\{y,z\}\;|\; z \in \Omega(x)\;\}.\] 
The cardinality of the new set is at least as large as $|A|$. 
Notice that, for any maximal independent set $Q$ in $G$, either 
$\{x,y\} \subseteq Q$ or $\{x,y\} \cap Q = \es$. 
By induction on the number of 
vertices in $G$, Equation~(\ref{eqn0}) is valid. 

\medskip 

\noindent
Let $x$ be a true twin of a vertex $y$ in $G$. 
Let $A$ be a maximum independent set of edges in $G$ and 
let $A(x)$ and $A(y)$ be the sets of  
edges in $A$ that 
are incident with $x$ and $y$. 
If $\{x,y\} \in A$ then $A(x)=A(y)=\{x,y\}$. 

\medskip 

\noindent
Now assume that $\{x,y\} \notin A$. 
Endvertices in $N(x)$ of edges in $A(x)$ and $A(y)$ 
are not adjacent nor do they coincide. Replace $A(x)$ with 
\[\{\;\{x,z\}\;|\; \{x,z\} \in A(x) \quad\text{or}\quad 
\{y,z\} \in A(y) \;\}\] 
and set $A(y)=\es$. Then the new set of edges is independent 
and has the same cardinality as $A$.   

\medskip 

\noindent
Let $Q$ be an independent set in $G$. At most one of 
$x$ and $y$ is in $Q$. 
The validity of Equation~(\ref{eqn0}) 
is easily checked.  
\qed\end{proof}

\section{Graphs without odd wheels}

A wheel $W_n$ is a graph consisting of a cycle $C_n$ and one 
additional vertex adjacent to all vertices in the cycle. 
The universal vertex of $W_n$ is called the hub. It is unique unless 
$W_n=K_4$. The edges incident with the hub are called the 
spokes of the wheel. The cycle is called the rim of the wheel. 
A wheel is odd if the number of vertices in the cycle is odd. 

\bigskip 
 
Lakshmanan, Bujt\'as and Tuza investigate the class of graphs 
without odd wheels in~\cite{kn:lakshmanan}. 
They prove that Tuza's conjecture holds true for 
this class of graphs. 

\bigskip 

Notice that a graph $G$ has no odd wheel if and only if 
every neighborhood in $G$ induces a bipartite graph. It follows 
that $\omega(G) \leq 3$. Obviously, the class of graphs 
without odd wheels is closed under taking subgraphs. 
Notice that, when $G$ has no odd wheel then every neighborhood 
in $K_e(G)$ is either empty or a matching. Furthermore, 
it is easy to see that $K_e(G)$ contains no diamond (every 
edge is in exactly one triangle), no $C_5$ and no  
odd antihole. 

\bigskip 

For graphs $G$ without odd wheels $K_e(G)$ coincides 
with the anti-Gallai graphs introduced by Le~\cite{kn:le}. 
For general anti-Gallai graphs the computation of the clique  
number and chromatic number 
are NP-complete.

\bigskip 

Let us mention that the recognition of anti-Gallai graphs 
of general graphs  
is NP-complete~\cite{kn:anand}. 
The recognition of edge-clique graphs of graphs without odd 
wheels is, as far as we know, open. Let us also mention 
that the edge-clique graphs 
of graphs without odd wheels are clique graphs~\cite{kn:cerioli2}. 
The recognition of 
clique graphs of general graphs is NP-complete~\cite{kn:alcon}.  

\bigskip 

\begin{theorem}
\label{NP_c}
The computation of $\alpha^{\prime}(G)$ is NP-complete 
for graphs $G$ without odd wheels. 
\end{theorem}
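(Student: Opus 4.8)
The plan is to reduce from a known NP-complete problem, exploiting the fact that for a triangle-free graph $H$, the edge-clique graph $K_e(H)$ is an independent set, so $\alpha^{\prime}(H) = |E(H)|$ carries no information; hence I need a construction that forces triangles in a controlled way. A natural target is to reduce from \textsc{Independent Set} itself, or from 3-SAT, but the cleanest route is to mimic the gadget already used in the earlier NP-hardness lemma (attaching simplicial vertices to edges and a universal vertex $x$), adapted so that the resulting graph contains no odd wheel. First I would take an arbitrary graph $G$ (an instance of \textsc{Independent Set}) and replace it by a triangle-free, actually bipartite-neighborhood, gadget: subdivide or otherwise sparsify $G$ so that selecting an independent set in $G$ corresponds to selecting a large independent set of edges in the gadget, while every vertex neighborhood in the gadget induces a bipartite graph.

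The key steps, in order, are: (1) show the problem is in NP — a certificate is a set $A$ of edges of $G$, and checking that no two edges of $A$ lie in a common triangle is polynomial since $\omega(G)\le 3$ for odd-wheel-free $G$, so cliques are edges or triangles; (2) describe the reduction: starting from a graph $G'$ that is the instance of some NP-complete problem (I would try \textsc{Independent Set} on graphs of bounded degree or triangle-free graphs, where it remains hard), build a graph $H$ without odd wheels by attaching small gadgets (e.g., a pendant triangle or a pair of simplicial vertices) at each relevant edge or vertex so that independent edge sets in $K_e(H)$ encode independent vertex sets in $G'$; (3) verify that $H$ has no odd wheel, i.e., that every neighborhood of $H$ induces a bipartite graph — this is where the gadget must be designed carefully, keeping all local structure bipartite; (4) prove the correctness equation $\alpha^{\prime}(H) = c + \alpha(G')$ for an explicit constant $c$ depending on the gadget count, by the same exchange argument as in the earlier lemma: an optimal independent edge set never uses an internal gadget edge when a simplicial substitute is available, and the surviving "choice" edges at the gadgets are pairwise non-independent exactly when the corresponding vertices of $G'$ are adjacent.

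The main obstacle I expect is step (3) combined with (4): the earlier hardness gadget adds a \emph{universal} vertex $x$ adjacent to all of $G$, which immediately creates odd wheels whenever $G$ contains an odd cycle, so that construction is unusable here. I will need a genuinely different encoding of adjacency — one that does not route all selector edges through a single high-degree vertex. A promising fix is to keep the base graph triangle-free (so the only triangles in $H$ are inside the attached gadgets) and encode the edges of $G'$ by forcing, for each edge $uv$ of $G'$, a shared triangle between the selector edge at $u$ and the selector edge at $v$; ensuring that these forced triangles do not accumulate into a wheel around any vertex is the delicate part, and will likely require that each selector edge participates in only boundedly many such triangles, which may in turn force a reduction from a bounded-degree version of the source problem. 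Once the gadget is pinned down, the counting identity and the $\le$ and $\ge$ directions of $\alpha^{\prime}(H) = c + \alpha(G')$ follow from routine exchange arguments of the kind already demonstrated in the paper.
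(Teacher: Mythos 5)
Your proposal is a plan rather than a proof: the one thing the theorem actually requires --- an explicit gadget that encodes an NP-hard problem inside an odd-wheel-free graph --- is left open, and you say so yourself (``the delicate part'', ``will likely require'', ``once the gadget is pinned down''). The framing is right: membership in NP is easy since $\omega(G)\le 3$ means two edges are adjacent in $K_e(G)$ exactly when they span a common triangle, and you correctly observe that the universal-vertex reduction from the earlier lemma is unusable here, because a universal vertex has the whole of $G$ as its neighborhood, which is not bipartite in general. But identifying the obstacle is not the same as overcoming it, and the sketch you offer in its place does not obviously work. In an odd-wheel-free graph two edges lie in a common clique only if they share an endpoint, so your ``shared triangle between the selector edge at $u$ and the selector edge at $v$ for every edge $uv$ of $G'$'' forces the selector edge of a degree-$d$ vertex to share an endpoint with $d$ other selector edges; you must then prevent any unintended triangles among those $d$ edges (which would create spurious adjacencies in $K_e$, i.e., spurious edges of $G'$) while keeping every neighborhood bipartite. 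Nothing in the proposal shows this can be done, even for a bounded-degree source problem, and this construction is the entire content of the theorem.

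For comparison, the paper avoids encoding an arbitrary adjacency relation altogether: it reduces from 3-SAT, phrased as vertex cover in $K_e(G)$, using as clause gadget a copy of $L(K_4)$ --- whose twelve edges partition into three maximum independent edge sets, each an induced $C_4$, giving three interchangeable literal slots --- together with a triangle per variable carrying the edges labelled $x$ and $\bar x$, and small triangle links identifying literal occurrences in clauses with literal edges in variable gadgets. Every vertex of the resulting graph has a bipartite neighborhood by construction, and the count $L+8M$ for the vertex cover does the rest. To salvage your route you would need to either commit to a concrete degree-bounded source problem and exhibit the selector-edge configuration explicitly, verifying both the no-odd-wheel condition and the exchange argument, or switch to a clause-based reduction as the paper does.
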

\begin{proof}
We reduce 3-SAT to the vertex cover problem in 
edge-clique graphs of graphs without odd wheels. 

\medskip 

\noindent
Let $H \cong L(K_4)$, ie, the complement of $3K_2$.  
Let $S$ be a 3-sun. The graph $H$ is obtained from $S$ by adding 
three edges between pairs of vertices of 
degree two in $S$.%
\footnote{In~\cite[Theorem~14]{kn:lakshmanan2} the authors 
prove that every maximal clique in $K_e(G)$ contains a 
simplicial vertex if and only if $G$ does not contain,  
as an induced subgraph, $K_4$  
nor a 3-sun with 0, 1, 2 or 3 edges connecting the 
vertices of degree two.}
Call the three vertices of degree four in $S$, the `inner 
triangle' of $H$ and call the remaining three vertices of $H$ the 
`outer triangle.' 

\begin{figure}[ht]
  \centering
  \includegraphics[width=0.8\linewidth]{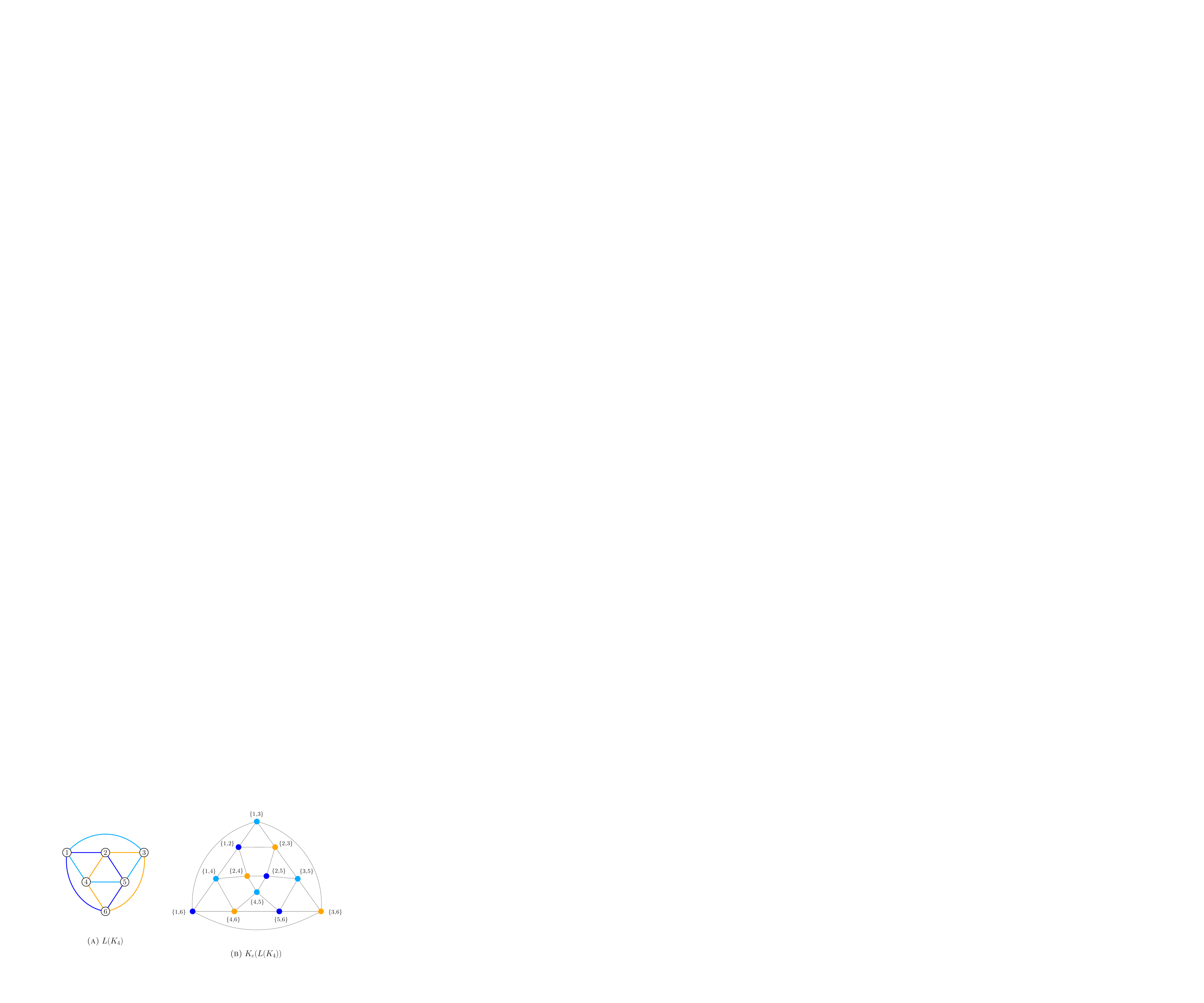}
  \caption{This figure shows $L(K_4)$ and $K_e(L(K_4))$.
  The edges of $L(K_4)$ are colored with three colors
  and so are their corresponding vertices in $K_e(L(K_4))$.
  The three colors indicate the partition into
  three maximum independent sets of edges of $L(K_4)$.
  }
  \label{fig:H_KeH}
\end{figure}

\medskip 

\noindent
Notice that $H$ has 3 maximum independent sets of 
edges. 
Each maximum independent set of edges is an induced $C_4$ 
consisting of one 
edge from the inner triangle, one edge from the outer triangle, 
and two edges between the two triangles. The three independent sets 
partition the edges of $H$. 

\noindent
The six edges of $H$ between the inner and outer triangle form 
a 6-cycle in $K_e(H)$. Let $F$ denote this set of edges in $H$.  

\medskip 

\noindent
For each clause $(x_i \; \vee \; x_j \; \vee \; x_k)$
take one copy of $H$.   
Take an independent set of three edges contained in $F$ 
and label these with $x_i$, $x_j$ and $x_k$. 

\medskip 

\noindent
For each variable $x$ take a triangle. Label one edge 
of the triangle with the literal $x$ and one other edge of 
the triangle with its negation $\Bar{x}$. 

\medskip 

\noindent 
Construct links between variable gadgets and clause gadgets 
as follows. Let $(x_i\;\vee\;x_j\;\vee\;x_k)$ be a 
clause. Identify one endpoint of the edge $x_i$ in the clause 
gadget with an endpoint of the edge labeled $x_i$ in the variable 
gadget. Add an edge between the other two endpoints. 
Construct links for the other two literals in the clause in the 
same manner. 
 
\medskip 

\noindent 
Let $G$ be the graph constructed in this manner. 
Notice that $K_e(G)$ contains some simplicial vertices; 
namely the unlabeled edges in each variable gadget and the 
unlabeled edges in the links. Notice that these simplicial vertices 
can be removed without changing the complexity of the vertex cover 
problem.\footnote{Consider any graph $W$. Assume that $W$ has a 
simplicial vertex $s$. Then the vertex cover number 
satisfies $\kappa(W)=\kappa(W-s)$, unless 
the component that contains $s$ is $\{s\}$ or an edge. 
See eg~\cite[Theorem~2.64]{kn:kloks}.} 
Let $K$ be the graph obtained from $K_e(G)$ by removing these 
simplicial vertices. 

\medskip 

\noindent
Let $L$ be the number of variables and let $M$ be the number of clauses 
in the 3-SAT formula. 
Assume that there is a satisfying assignment. Then choose the vertices in 
$K$  
corresponding to literals that 
are {\sc true} in the vertex cover. 
The variable gadgets need $L$ vertices in the vertex cover.  
Since this assignment is satisfying, we need at most 
$8M$ vertices to cover the remaining edges in the 
clause structures, since the outgoing edge from each literal 
which is {\sc true} is covered. Thus there is a vertex 
cover of $K_e(G)$ with 
$L+8M$ vertices. 

\medskip 

\noindent
Assume that $K_e(G)$  
has a vertex cover with $L+8M$ vertices. At least 
$L$ vertices in $K$ are covering the edges in the variable gadgets. 
The other $8M$ vertices of $K$ are covering the 
edges in the clause gadgets. 
Take the literals of the variable gadgets 
that are in the vertex cover as an assignment 
for the formula. Each clause gadget must have one literal vertex 
of which the outgoing edge is covered. Therefore, the assignment is 
satisfying. 
\qed\end{proof}
 
\section{Concluding remark}

As far as we know, the recognition of 
edge-clique graphs is an open problem. 

\bigskip

Let $K_n^m$ denote the complete multipartite graph with $m$ partite sets 
each having $n$ vertices. Obviously, $K_n^m$ is a cograph with $n\cdot m$ 
vertices. 

\begin{theorem}[\cite{kn:park}]
Assume that 
\[3 \leq m \leq n+1.\] 
Then $\theta_e(K_n^m)=n^2$ if and only if there exists  
a collection of at least $m-2$ pairwise orthogonal Latin squares 
of order $n$. 
\end{theorem}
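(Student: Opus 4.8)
The plan is to show that the bound $\theta_e(K_n^m)=n^2$ is equivalent to a very rigid edge-clique cover, then to recognize that object as an orthogonal array, which in turn is the classical encoding of mutually orthogonal Latin squares (MOLS).

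First I would prove the universal lower bound $\theta_e(K_n^m)\ge n^2$, valid for every $m\ge 2$. Fix two parts $V_1,V_2$ of $K_n^m$. Any clique of $K_n^m$ contains at most one vertex of $V_1$ and at most one of $V_2$, hence at most one of the $n^2$ edges joining $V_1$ to $V_2$; since all of these edges must be covered, at least $n^2$ cliques are required.

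Next I would argue that equality forces the cover to be an edge-clique \emph{partition} into $n^2$ maximal cliques. A clique of $K_n^m$ has at most $m$ vertices, so it covers at most $\binom{m}{2}$ edges, whereas $K_n^m$ has exactly $\binom{m}{2}n^2$ edges. For any edge-clique cover $C_1,\dots,C_{n^2}$ we therefore get
\[
\binom{m}{2}n^2\;\le\;\sum_{t=1}^{n^2}\binom{|C_t|}{2}\;\le\;n^2\binom{m}{2},
\]
so both inequalities are equalities: each $C_t$ has exactly $m$ vertices — a transversal meeting every part — and the $C_t$ are pairwise edge-disjoint. Labelling the vertices of each part by $\{0,\dots,n-1\}$ and listing the $t$-th transversal as a row $(a_{t,1},\dots,a_{t,m})$, the partition condition reads: for every pair of columns $i\ne j$ and every ordered pair $(a,b)$, exactly one row $t$ has $a_{t,i}=a$ and $a_{t,j}=b$. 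In other words, $[a_{t,i}]$ is an orthogonal array $\mathrm{OA}(n,m)$ of strength $2$ and index $1$; conversely, every such array yields, row by row, a partition of the edges of $K_n^m$ into $n^2$ transversal cliques. Hence $\theta_e(K_n^m)=n^2$ if and only if an $\mathrm{OA}(n,m)$ of strength $2$ and index $1$ exists.

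Finally I would invoke the textbook equivalence between orthogonal arrays and Latin squares: an $\mathrm{OA}(n,m)$ of strength $2$ and index $1$ exists if and only if there are $m-2$ pairwise orthogonal Latin squares of order $n$. (Normalize the first two columns to list all pairs $(a,b)$ in a fixed order; each remaining column then becomes a map $L_\ell\colon\{0,\dots,n-1\}^2\to\{0,\dots,n-1\}$ that is a Latin square because the column pairs $\{1,\ell\}$ and $\{2,\ell\}$ are orthogonal, and the orthogonality of columns $\{\ell,\ell'\}$ is exactly the orthogonality of $L_\ell$ and $L_{\ell'}$; the construction reverses.) Chaining the three equivalences proves the theorem. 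The only real care is needed in the normalization bookkeeping of this last step; everything else is pure counting. I would also remark that the hypothesis $3\le m\le n+1$ merely isolates the interesting range — since at most $n-1$ MOLS of order $n$ exist, for $m>n+1$ both sides simply fail, and $m=2$ is degenerate.
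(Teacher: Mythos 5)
Your proposal is correct. Note, however, that the paper does not prove this statement at all: it is quoted verbatim from Park, Kim and Sano \cite{kn:park} (whose own derivation goes through competition numbers of complete multipartite graphs), and the only trace of an argument in the present paper is the remark that a cover by $n^2$ cliques is necessarily edge-disjoint. Your write-up supplies exactly the missing self-contained proof, by the standard route: the lower bound $\theta_e(K_n^m)\ge n^2$ from the fact that a clique meets each part at most once; the double count
\[
\binom{m}{2}n^2=|E(K_n^m)|\le\sum_{t}\binom{|C_t|}{2}\le n^2\binom{m}{2}
\]
forcing every clique of an optimal cover to be a full transversal and the cover to be a partition; the identification of such a partition with an orthogonal array of strength $2$ and index $1$ on $m$ columns; and the classical bijection between $\mathrm{OA}(n,m)$ and $m-2$ MOLS of order $n$. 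All four steps are sound, and the converse direction (MOLS $\Rightarrow$ cover of size $n^2$) correctly combines the construction with the lower bound to get equality. Your closing remark about the range $3\le m\le n+1$ is also accurate: outside it the equivalence is vacuous or degenerate rather than false. The one point worth making explicit in a final version is that the phrase ``at least $m-2$'' in the statement is interchangeable with ``exactly $m-2$'' since any subcollection of MOLS is again a collection of MOLS; otherwise nothing needs repair.
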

Notice that, if there exists an edge-clique cover of $K_n^m$ 
with $n^2$ cliques, then these cliques are mutually edge-disjoint. 

\bigskip 

Finding the maximum number of pairwise orthogonal 
Latin squares of order $n$ is a renowned open problem. 
The problem has a wide field 
of applications, eg in combinatorics, designs 
of experiments, group theory and quantum informatics.   

Unless $n$ is a prime power, 
the maximal number of MOLS is known for only a few 
orders. We briefly mention a few results.
Let $f(n)$ denote the 
maximal number of MOLS of order $n$. The well-known `Euler-spoiler'  
shows that $f(n)=1$ only for 
$n=2$ and $n=6$. 
Also, $f(n) \leq n-1$ for all $n>1$, 
and Chowla, Erd\"os and Straus~\cite{kn:chowla} 
show that 
\[\lim_{n \rightarrow \infty} f(n) = \infty.\] 
Define 
\[n_r = \max\;\{\;n\;|\; f(n) < r\;\}.\] 
A lowerbound for the speed at which $f(n)$ grows was obtained 
by Wilson, who showed that $n_r < r^{17}$ when 
$r$ is sufficiently large~\cite{kn:wilson}. 
Better bounds for $n_r$, for some specific values of $r$, were 
obtained by various authors (see eg~\cite{kn:brouwer}). 

See eg ~\cite{kn:feifei} for some recent computational 
attempts to find orthogonal Latin squares. 
The problem seems extremely hard, both from a 
combinatorial and from a computational point of view. 
Despite many efforts, 
the existence of three pairwise orthogonal Latin squares of 
order 10 is, as far as we know, still unclear. 

\begin{conjecture}
The edge-clique cover problem is NP-complete for cographs. 
\end{conjecture}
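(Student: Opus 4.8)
\bigskip

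\noindent
\emph{A proof strategy.}
The plan is to reduce a suitable NP-complete problem to the edge-clique cover problem on cographs, after first localizing where the difficulty can possibly lie. Because $\theta_e(G)=\kappa(K_e(G))$ and because no clique meets two components, $\theta_e(G_1\oplus G_2)=\theta_e(G_1)+\theta_e(G_2)$, so all of the hardness is carried by the join operation, and one may even hope that complete multipartite cographs already suffice. For such a graph $K_{n_1,\dots,n_m}$ every maximal clique is a transversal, using exactly one vertex of each part, and it covers, for every pair of parts $\{i,j\}$, precisely one of the $n_in_j$ edges between them. Since a minimum edge-clique cover can always be taken to consist of maximal cliques, a short exchange argument identifies $\theta_e(K_{n_1,\dots,n_m})$ with the minimum number of ``rows'' that, for every pair of coordinates, realize all combinations of symbols --- that is, with the mixed covering-array number of strength two for alphabet sizes $n_1,\dots,n_m$. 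The theorem of~\cite{kn:park} quoted above is exactly the sub-case where this number attains its trivial lower bound $n^2$.

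\medskip

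\noindent
Given this reformulation, it suffices to prove two things: first, the polynomial equivalence just sketched, done carefully (and, if helpful, with extra gadgetry that makes selected edges of $G$ free to cover, in the spirit of --- though not identical to --- the simplicial-vertex simplification used in the proof of Theorem~\ref{NP_c}, so that only a prescribed sub-structure remains relevant); and second, that computing the mixed strength-two covering-array number is NP-hard when the parameters are presented in unary. For the latter the natural attempt is a reduction from a constraint-satisfaction or hypergraph-covering problem, in which unequal alphabet sizes encode the instance in such a way that a covering array of the threshold size exists precisely when the instance is satisfiable.

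\medskip

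\noindent
The step I expect to be the real obstacle is the second one, and it is in fact the real content of the conjecture. The complexity of covering-array numbers at strength two is, as far as we know, itself unsettled; and the natural uniform instances land exactly on the existence question for mutually orthogonal Latin squares, which is notoriously open, so a successful reduction must escape the ``orthogonal array'' regime by using non-uniform part sizes and deleted edges to give the covering problem enough genuine combinatorial slack, all while the underlying graph stays $P_4$-free. A secondary obstacle --- and the reason the classical NP-completeness proofs for edge-clique cover, such as Holyer's edge-partition-into-triangles reduction~\cite{kn:holyer} and the reduction of~\cite{kn:kou}, cannot simply be transplanted --- is the rigidity of the cograph clique structure: along the cotree a clique lies in a single child at each $\oplus$ node and is the union of cliques of both children at each $\otimes$ node, so the available maximal cliques are highly structured ``boxes'' that cannot imitate the bespoke sparse triangle patterns on which those reductions depend. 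Finally, we note that the conjecture is not a formal consequence of the width results of the present paper, which only rule out generic rankwidth-based dynamic programming, nor of the ETH-based kernel lower bounds of Cygan et al., which bound kernel sizes over instances that are not cographs; neither precludes a polynomial-time algorithm tailored to the cograph class.
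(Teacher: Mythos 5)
The statement you were asked about is a \emph{conjecture}: the paper offers no proof of it, only circumstantial evidence, and your submission --- quite properly --- does not prove it either. So the first thing to say is that there is no ``paper proof'' to compare against, and your text should not be read as closing the question. What you have written is a reduction of the conjecture to a different open problem, together with an honest identification of where the difficulty sits.

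That said, the reduction you do carry out is correct and is essentially the same observation the authors use to motivate the conjecture. Your chain $\theta_e(G)=\kappa(K_e(G))$, additivity over $\oplus$, and the fact that for a complete multipartite graph every maximal clique is a transversal of the parts, correctly identifies $\theta_e(K_{n_1,\dots,n_m})$ with the minimum size of a mixed covering array of strength two on alphabets of sizes $n_1,\dots,n_m$; the theorem of Park, Kim and Sano quoted in the paper is precisely the uniform sub-case in which this number meets its trivial lower bound $n^2$ and the rows form an orthogonal array, i.e.\ $m-2$ MOLS of order $n$. Your remark that Holyer-style and Kou--Stockmeyer--Wong-style reductions cannot be transplanted because cograph cliques are forced to be ``boxes'' along the cotree is also a fair diagnosis of why the problem resists the standard attacks. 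The genuine gap is exactly the one you name: NP-hardness of the mixed strength-two covering-array number (with unary parameters) is not established, and without it the whole construction proves nothing about cographs. Two cautions for any future attempt along these lines. First, general cographs are not complete multipartite, so even granting hardness of covering arrays you would still need to handle arbitrary nestings of $\otimes$ and $\oplus$, or else show that complete multipartite instances (which are the graphs $K_{n_1,\dots,n_m}$, a strict subclass of cographs) already suffice. Second, be careful with the claim that ``a minimum edge-clique cover can always be taken to consist of maximal cliques'': it is true (enlarging a clique never uncovers an edge), but it is the only place where the exchange argument does any work, so state it explicitly rather than leaving it at the level of ``a short exchange argument.'' As it stands, your submission is a useful reformulation and a correct account of why the conjecture is plausible and why it is hard, but it is not a proof, and should be presented as such.
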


\end{document}